\numberwithin{equation}{section}
\theoremstyle{plain}
\newtheorem{theorem}{Theorem}[section]
\newtheorem{lemma}[theorem]{\bf{Lemma}}
\newenvironment{pf}{{\noindent \bf Proof.\/}}{\hfill$\Box$}
\date {}
\begin{document}

\title[Harnack's inequality and Green functions]{Harnack's inequality and Green functions on locally finite graphs}

\author{Li Ma}

\address{Li Ma, Department of mathematics \\
Henan Normal university \\
Xinxiang, 453007 \\
China}

\email{lma@tsinghua.edu.cn}

\thanks{ The research is partially supported by the National Natural Science
Foundation of China (No.11271111).}

\begin{abstract}
In this paper we study the gradient estimate for positive solutions
of Schrodinger equations on locally finite graph. Then we derive
Harnack's inequality for positive solutions of the Schrodinger
equations. We also set up some results about Green functions of the
Laplacian equation on locally finite graph. Interesting properties
of Schrodinger equation are derived.

{ \textbf{Mathematics Subject Classification 2000}: 31C20, 31C05}

{ \textbf{Keywords}: locally finite graph, Harnack's inequality,
Green functions}
\end{abstract}

\maketitle

\section{Introduction}

Just like Poisson equation on graphs, the Schrodinger equations on
graphs are of fundamental importance. Such equations arise naturally
from the discrete process of their continuous counterparts. The
discrete equations are also useful to numerical purpose. It is a
interesting topic to understand the solution structure for linear
Schrodinger equation on graphs. In this paper we study the Harnack
inequalities for positive solutions of a class of Schrodinger
equations on locally finite graphs. We derive local Harnack
inequalities for positive solutions to Schrodinger equations based
on an improved gradient estimates in \cite{MW} (see also \cite{LY}
for more related works). We use this oppertunaty to point out that
Kato's inequalities had been previously found in \cite{DK}). Kato's
inequalities have been used by us to study the Ginzburg-Landau
equation \cite{MW} (see also \cite{M} for more background). We use
Kato's inequality and the maximum principle to understand the
principal eigenvalues of Schrodinger equations. Just as we have
known for elliptic partial differential equations of second order,
the Harnack inequality for Schrodinger equation is a basic tool to
obtain existence and compactness results for their solutions.
Harnack inequality on some special graphs is obtained in \cite{CY}.

Green's functions had been introduced in a famous essay by George
Green in 1828 and have been extensively used in solving differential
equations from mathematical physics. In particular, Green functions
are very useful to solve the Poisson equations. The concept of
Green's functions has also had a pervasive influence in numerous
areas. As pointed out ed in \cite{CY2}, Green's functions provide a
powerful tool in dealing with a wide range of combinatorial
problems. Many formulations of Green's functions occur in a variety
of topics. There is some interesting works about Green functions on
graph \cite{Ch}. Based on our Harnack inequality, we consider the
existence of global Green functions for discrete Laplace equations
defined on locally finite connected graphs. Although we use
differential tools, we can get results in graphs which have their
counterparts in Riemannian geometry.

The plan of the paper is below. Notations and Harnack inequalities
for positive solutions to schrodinger equations are introduced in
section \ref{sect2}. All results about Green functions are stated
and proved in section \ref{sect3}.

\section{gradient estimate and Harnack's inequality}\label{sect2}
We first recall some definitions and results from the book \cite{Ch}
and from our paper \cite{MW}.

Let $(X, {\mathcal E})$ be a graph with countable vertex set $X$ and
edge set ${\mathcal E}$.  We assume that the graph is {\it simple},
i.e., no loop and no multi-edges. We also assume that the graph is
connected and each vertex has finite neighbors. We simply call the
graphs with these properties \emph{the locally finite connected
graphs}. Let $\mu_{xy}= \mu_{yx}
>0$ is a symmetric weight on ${\mathcal E}$. We define $d_x =
\sum_{(x,y)\in{\mathcal E}} \mu_{xy}$
(we also assume $d_x<\infty$ for all $x\in X$) the {\it degree of } $x\in X$.\\
We use $d(x,y)$ to denote the distance between the vertices $x$ and
$y$ in $X$.

Denote by
\[
\ell(X) = \{ u:\ u:\ X \longrightarrow {\Bbb R}\},
\]
the set of all real functions (or complex-valued functions with
$\Bbb R$ replaced by $\Bbb C$ on $X$). The integral of $u$ over $X$
is defined by $\int_X u dx=\sum_{x\in X} u(x)d_x$ if the latter sum
makes sense.

We define the {\it Laplacian operator} $\Delta: \ \ell(X)
\longrightarrow \ell(X)$:
\[
(\Delta u) (x) = \sum_{(x,y)\in {\mathcal E}}
\frac{\mu_{xy}}{d_x}\big( u(y) - u(x)\big).
\]
We also define
\[
|\nabla u|^2 (x) = \sum_{(x,y)\in {\mathcal E}} \frac{\mu_{xy}}{d_x}
\big(u(y) - u(x)\big)^2.
\]
From the definition, we know that
$$
(\Delta u(x))^2\leq |\nabla u|^2 (x).
$$

Then in our previous paper \cite{MW}, we have proved the following
results.

\begin{lemma} \label{lemma-Kato}
(First Kato's inequality) For a graph $X$, we have
\[
| \nabla u|^2 \ge \big|\nabla |u| \big|^2
\]
and  (Second Kato's inequality)
\begin{eqnarray}
\Delta |u| & \ge {\rm sign} (u) \Delta u, \label{eq-kato-2}\\
\Delta u_+ & \ge {\rm sign}_+(u) \Delta u.\label{eq-kato-3}
\end{eqnarray}
Here $u_+=\frac{|u|+u}{2}$ is the positive part of the function $u$.
\end{lemma}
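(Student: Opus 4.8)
The plan is to prove all three inequalities by working pointwise at a fixed vertex $x\in X$ and expanding the relevant sums over neighbors $(x,y)\in\mathcal E$, using only elementary inequalities for real numbers together with the normalization $\sum_{(x,y)\in\mathcal E}\mu_{xy}/d_x=1$. For the first Kato inequality, I would write
\[
|\nabla u|^2(x)-\big|\nabla|u|\big|^2(x)=\sum_{(x,y)\in\mathcal E}\frac{\mu_{xy}}{d_x}\Big[\big(u(y)-u(x)\big)^2-\big(|u(y)|-|u(x)|\big)^2\Big],
\]
and observe that each bracket is nonnegative because $|a-b|\ge\big||a|-|b|\big|$ for all real $a,b$ (the reverse triangle inequality), hence each squared term dominates; summing nonnegative terms gives the claim. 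This step is completely routine.

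For the second Kato inequality \eqref{eq-kato-2}, I would compute, at a vertex $x$ with $u(x)\ne 0$ so that $\mathrm{sign}(u(x))$ is well defined,
\[
\Delta|u|(x)-\mathrm{sign}(u(x))\,\Delta u(x)=\sum_{(x,y)\in\mathcal E}\frac{\mu_{xy}}{d_x}\Big[\big(|u(y)|-|u(x)|\big)-\mathrm{sign}(u(x))\big(u(y)-u(x)\big)\Big].
\]
Since $|u(x)|=\mathrm{sign}(u(x))\,u(x)$, the $x$-terms cancel and the bracket reduces to $|u(y)|-\mathrm{sign}(u(x))\,u(y)$, which is $\ge 0$ because $\mathrm{sign}(u(x))\,u(y)\le|u(y)|$ for every $y$. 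Again the sum of nonnegative terms settles it. The case $u(x)=0$ must be handled by the usual convention (e.g. $\mathrm{sign}(0)=0$, or by noting $\Delta|u|(x)=\sum\frac{\mu_{xy}}{d_x}|u(y)|\ge 0$ directly), and this is the only mildly delicate point. Inequality \eqref{eq-kato-3} for the positive part $u_+$ follows by the same mechanism: write $u_+=\tfrac{|u|+u}{2}$, so $\Delta u_+=\tfrac12(\Delta|u|+\Delta u)$, and combine \eqref{eq-kato-2} with the identity $\mathrm{sign}_+(u(x))=\tfrac{1+\mathrm{sign}(u(x))}{2}$ to get $\Delta u_+(x)\ge\tfrac12\big(\mathrm{sign}(u(x))+1\big)\Delta u(x)=\mathrm{sign}_+(u(x))\Delta u(x)$ when $u(x)>0$; when $u(x)\le 0$ one checks $\mathrm{sign}_+(u(x))=0$ and $\Delta u_+(x)=\sum\frac{\mu_{xy}}{d_x}u_+(y)\ge 0$ directly.

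The main obstacle here is not analytical depth but bookkeeping at the vertices where $u$ or $u_+$ vanishes, since the sign functions are discontinuous there; the resolution is simply to treat those vertices separately, where the left-hand side is manifestly nonnegative (a sum of nonnegative neighbor values) while the right-hand side is zero by the sign convention. Since the problem is entirely local and every estimate reduces to a pointwise inequality between real numbers weighted by the probability measure $\mu_{xy}/d_x$, no convergence or summability issues arise beyond those already assumed ($d_x<\infty$, local finiteness), and the proof is short.
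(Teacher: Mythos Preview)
Your argument is correct. The paper does not actually prove this lemma here; it merely records it as a result established in the authors' earlier work \cite{MW}, so there is no in-paper proof to compare against. Your pointwise, term-by-term verification---using the reverse triangle inequality for the first Kato inequality and the identity $|u(x)|=\mathrm{sign}(u(x))\,u(x)$ to collapse the $x$-terms for \eqref{eq-kato-2}, together with linearity and $u_+=\tfrac12(|u|+u)$ for \eqref{eq-kato-3}---is the standard elementary route, and your separate treatment of the vertices where $u$ (respectively $u_+$) vanishes is exactly the right bookkeeping.
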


Recall that, by elementary computation \cite{MW}, we have
\begin{lemma}\label{lemma2}
$\Delta u^2 = 2 u \Delta u + |\nabla u|^2.$ Furthermore, if $u\Delta
u\geq 0$, then
$$
|\nabla u|^2\leq \Delta u^2.
$$
\end{lemma}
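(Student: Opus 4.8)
The plan is to prove the pointwise identity by a direct expansion and then simply read off the inequality.

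First I would fix $x\in X$ and write, straight from the definition of the Laplacian,
\[
(\Delta u^2)(x) = \sum_{(x,y)\in{\mathcal E}} \frac{\mu_{xy}}{d_x}\big(u(y)^2 - u(x)^2\big).
\]
The arithmetic heart of the matter is the elementary factorization
\[
u(y)^2 - u(x)^2 = \big(u(y)-u(x)\big)^2 + 2u(x)\big(u(y)-u(x)\big),
\]
that is, the cross term $-2u(x)u(y)$ produced by completing the square is exactly compensated. Multiplying by $\mu_{xy}/d_x$ and summing over the (finitely many, since the graph is locally finite) neighbours $y$ of $x$, the first summand reproduces $|\nabla u|^2(x)$ and the second reproduces $2u(x)(\Delta u)(x)$. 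This gives $\Delta u^2 = 2 u\Delta u + |\nabla u|^2$ at every vertex, which is the first assertion.

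For the second assertion I would simply observe that $|\nabla u|^2\ge 0$ always, so if $u\Delta u\ge 0$ at $x$ then $2u(x)(\Delta u)(x)\ge 0$, and hence, by the identity just proved,
\[
(\Delta u^2)(x) = 2u(x)(\Delta u)(x) + |\nabla u|^2(x) \ge |\nabla u|^2(x).
\]
Applying this at every $x\in X$ yields $|\nabla u|^2\le \Delta u^2$.

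There is essentially no obstacle here; the only points requiring a word of care are that local finiteness of the graph makes each of the sums above a finite sum, so the rearrangement of terms is legitimate, and that the hypothesis $u\Delta u\ge 0$ is to be understood pointwise on $X$. One could alternatively obtain the identity as the special case $v=u$ of a product rule $\Delta(uv)(x) = u(x)\Delta v(x) + v(x)\Delta u(x) + \sum_{(x,y)\in{\mathcal E}}\frac{\mu_{xy}}{d_x}\big(u(y)-u(x)\big)\big(v(y)-v(x)\big)$, but the direct computation above is the shortest route.
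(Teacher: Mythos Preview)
Your argument is correct. The paper itself does not give a proof of this lemma but simply recalls it as an ``elementary computation'' from \cite{MW}; your direct expansion via $u(y)^2-u(x)^2=(u(y)-u(x))^2+2u(x)(u(y)-u(x))$ is exactly the computation one carries out to verify it, and the inequality under the hypothesis $u\Delta u\ge 0$ follows immediately as you say.
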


We now get the gradient estimate for positive solutions to the
(stationary) Schrodinger equation.
\begin{theorem}\label{gradient}
Assume that $u,\ Q \in \ell(X)$, $u \ge 0$, such that $-\Delta u + Q
u = 0$. Then
\[ \label{bound}
|\nabla u|^2(x) \le P(x) u^2(x), \quad \forall x\in X,
\]
where  $P(x)=\hat{d}_x(1+Q(x))^2 - 2 Q(x) -1$ with the constant
$\hat{d}_x=\sup_{(x,y)\in {\mathcal E}} \frac{d_x}{\mu_{xy}}$.
\end{theorem}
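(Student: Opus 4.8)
The plan is to exploit the pointwise identity from Lemma \ref{lemma2}, namely $\Delta u^2 = 2u\Delta u + |\nabla u|^2$, together with the equation $\Delta u = Qu$, to obtain an expression for $|\nabla u|^2$ in terms of $\Delta u^2$ and $u^2$. Substituting $\Delta u = Qu$ gives $\Delta u^2 = 2Qu^2 + |\nabla u|^2$, hence
\[
|\nabla u|^2(x) = (\Delta u^2)(x) - 2Q(x)u^2(x).
\]
So everything reduces to bounding $(\Delta u^2)(x)$ from above by something of the form (const)$\cdot u^2(x)$ plus a term that recombines with $-2Qu^2$ to produce $P(x)u^2(x)$.

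The next step is to estimate $(\Delta u^2)(x) = \sum_{(x,y)\in\mathcal E}\frac{\mu_{xy}}{d_x}\big(u^2(y)-u^2(x)\big)$ from above. Since $u\ge 0$ we may simply discard the negative contributions and write $(\Delta u^2)(x) \le \sum_{(x,y)\in\mathcal E}\frac{\mu_{xy}}{d_x}u^2(y)$. Now I want to control $u(y)$ pointwise by $u(x)$ using the equation itself: from $(\Delta u)(x) = Q(x)u(x)$, i.e. $\sum_{(x,y)}\frac{\mu_{xy}}{d_x}(u(y)-u(x)) = Q(x)u(x)$, one isolates a single neighbour $y$ and uses $u\ge 0$ on all the other terms to get $\frac{\mu_{xy}}{d_x}u(y) \le (1+Q(x))u(x)$, hence $u(y) \le \frac{d_x}{\mu_{xy}}(1+Q(x))u(x) \le \hat d_x(1+Q(x))u(x)$ for each neighbour $y$ (here one should note $1+Q(x)\ge 0$, which follows from this same inequality since $u(y)\ge 0$, or handle the degenerate case separately). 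Squaring and summing over the $\le d_x/\mu_{xy}$-weighted neighbours: $\sum_{(x,y)}\frac{\mu_{xy}}{d_x}u^2(y) \le \hat d_x(1+Q(x))^2 u^2(x)\cdot\sum_{(x,y)}\frac{\mu_{xy}}{d_x} = \hat d_x(1+Q(x))^2 u^2(x)$, using $\sum_{(x,y)}\frac{\mu_{xy}}{d_x} = 1$. Wait — one must be careful: we get $u^2(y)\le \hat d_x^2(1+Q(x))^2 u^2(x)$ from the crude bound, but the extra factor $\hat d_x$ rather than $\hat d_x^2$ in $P(x)$ suggests the weighting $\frac{\mu_{xy}}{d_x}$ must be kept attached to one copy of $u(y)$; that is, one writes $u^2(y) = u(y)\cdot u(y)$ and bounds only one factor by $\frac{d_x}{\mu_{xy}}(1+Q(x))u(x)$ while keeping $\frac{\mu_{xy}}{d_x}u(y)\le(1+Q(x))u(x)$ for the other, so that $\frac{\mu_{xy}}{d_x}u^2(y)\le \frac{d_x}{\mu_{xy}}(1+Q(x))^2u^2(x)\cdot\frac{\mu_{xy}}{d_x}$... this bookkeeping is exactly the delicate point.

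Putting the pieces together: $(\Delta u^2)(x) \le -u^2(x) + \hat d_x(1+Q(x))^2 u^2(x)$, where the $-u^2(x)$ comes from the $\sum \frac{\mu_{xy}}{d_x}(-u^2(x))$ term I should \emph{not} have discarded after all, and then
\[
|\nabla u|^2(x) = (\Delta u^2)(x) - 2Q(x)u^2(x) \le \big(\hat d_x(1+Q(x))^2 - 1 - 2Q(x)\big)u^2(x) = P(x)u^2(x),
\]
as claimed. The main obstacle I anticipate is the careful accounting in the neighbour-summation step: one must decide precisely which copy of $u(y)$ carries the weight $\mu_{xy}/d_x$ and which carries the reciprocal bound $d_x/\mu_{xy}$, keep track of the normalization $\sum_{(x,y)\in\mathcal E}\mu_{xy}/d_x = 1$, and justify the sign condition $1+Q(x)\ge 0$ needed to square an inequality between nonnegative quantities. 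Once that is set up correctly, the rest is algebraic simplification.
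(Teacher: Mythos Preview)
Your plan is correct and follows essentially the same route as the paper: both reduce the problem to bounding $\sum_{(x,y)\in\mathcal E}\frac{\mu_{xy}}{d_x}u^2(y)$ by $\hat d_x(1+Q(x))^2u^2(x)$, using the identity $\sum_{(x,y)\in\mathcal E}\frac{\mu_{xy}}{d_x}u(y)=\Delta u(x)+u(x)=(1+Q(x))u(x)$, and then the remaining terms $-2u\Delta u - u^2 = -(2Q+1)u^2$ drop out exactly as you found. Your opening via Lemma~\ref{lemma2} is just a repackaging of the paper's direct expansion of $|\nabla u|^2$.

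The only minor difference is in how the key sum is controlled. You extract the pointwise bound $\frac{\mu_{xy}}{d_x}u(y)\le(1+Q(x))u(x)$ for each neighbour and feed it back into one factor of $u^2(y)$. The paper instead writes, with $a_y:=\frac{\mu_{xy}}{d_x}u(y)\ge 0$,
\[
\sum_{y}\frac{\mu_{xy}}{d_x}u^2(y)=\sum_{y}\frac{d_x}{\mu_{xy}}\,a_y^2\le \hat d_x\sum_y a_y^2\le \hat d_x\Big(\sum_y a_y\Big)^2=\hat d_x\,(1+Q(x))^2u^2(x),
\]
using only $\sum a_y^2\le(\sum a_y)^2$ for nonnegative $a_y$. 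This sidesteps the bookkeeping you flagged as delicate and avoids any separate discussion of the sign of $1+Q(x)$, since nothing is squared across an inequality. Otherwise the arguments are the same.
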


\begin{pf}
Observe that
\[
\Delta u (x) = \sum_{(x,y)\in {\mathcal E}} \frac{\mu_{xy}}{d_x}
\big( u(y) - u(x) \big) = \sum_{(x,y)\in {\mathcal E}}
\frac{\mu_{xy}}{d_x} u(y) - u(x).
\]
Hence,
\[
\sum_{(x,y)\in{\mathcal E}} \frac{\mu_{xy}}{d_x} u(y) = \Delta u(x)
+ u(x).
\]
By definition, we have
\[
\begin{aligned}
|\nabla u|^2 (x) & = \sum_{(x,y)\in{\mathcal E}} \frac{\mu_{xy}}{d_x} \big( u(y) -u(x) \big)^2 \\
& = \sum_{(x,y)\in{\mathcal E}} \frac{\mu_{xy}}{d_x} \left( -2u(x) \big( u(y) - u(x) \big) -u^2(x) + u^2(y) \right)\\
& = -2 u(x) \sum_{(x,y)\in{\mathcal E}} \frac{\mu_{xy}}{d_x} \big( u(y) - u(x) \big) -u^2(x) + \sum_{(x,y)\in{\mathcal E}} \frac{d_x}{\mu_{xy}} \left(\frac{\mu_{xy}}{d_x} u(y) \right)^2\\
& \le -2 u(x) \Delta u(x) - u^2(x) + \hat{d}_x\left(\sum_{(x,y)\in{\mathcal E}} \frac{\mu_{xy}}{d_x} u(y) \right)^2 \\
& = -(2 Q(x) + 1) u^2(x) + \hat{d}_x\big( \Delta u(x) + u(x) \big)^2\\
& = -(2 Q(x) +1) u^2(x) + \hat{d}_x \big( 1 + Q(x) \big)^2 u^2(x)\\
& = \left( \hat{d}_x(1+ Q(x) )^2 - 2Q(x) -1 \right) u^2(x).
\end{aligned}
\]
 In the first inequality,
 we have used $\frac{\mu_{xy}}{d_x} u(y) \ge 0$ for all $y\in X$ such
that $(x,y)\in{\mathcal E}$.
\end{pf}

From the result above we can easily derive the following Harnack
equality.

\begin{theorem}\label{harnack} Assume that $u,\ Q \in \ell(X)$, $u \ge 0$, such that
$-\Delta u + Q u = 0$. Given any finite set $S\subset X$, there
exists a uniform constant $C=C(S)$ such that
\begin{equation}\label{inf-sup}
\sup_S u(x)\leq C\inf_S u(x).
\end{equation}
\end{theorem}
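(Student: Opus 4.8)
The plan is to convert the pointwise gradient estimate of Theorem~\ref{gradient} into a multiplicative comparison of $u$ at neighbouring vertices, and then to chain such comparisons along paths lying inside a finite connected set that contains $S$.

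First I would extract the edgewise form of Theorem~\ref{gradient}. Since every summand of $|\nabla u|^2(x)=\sum_{(x,y)\in\mathcal E}\frac{\mu_{xy}}{d_x}(u(y)-u(x))^2$ is at most the whole sum, Theorem~\ref{gradient} gives, for each edge $(x,y)\in\mathcal E$,
\[
(u(y)-u(x))^2\le \frac{d_x}{\mu_{xy}}\,P(x)\,u^2(x)\le \hat d_x\,P(x)\,u^2(x).
\]
Here $\hat d_x<\infty$ by local finiteness, and $\hat d_x\ge 1$ because $d_x\ge\mu_{xy}$ for each edge; consequently $P(x)=\hat d_x(1+Q(x))^2-2Q(x)-1\ge Q(x)^2\ge 0$, so $\sqrt{\hat d_x P(x)}$ is well defined. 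Taking square roots and using $u\ge 0$ yields
\[
u(y)\le\big(1+\sqrt{\hat d_x P(x)}\,\big)\,u(x)\qquad\text{whenever }(x,y)\in\mathcal E.
\]

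Next I would reduce to a connected situation. Because $X$ is connected, for a fixed vertex $s_0\in S$ and each $s\in S$ there is a finite path in $X$ from $s_0$ to $s$; let $\tilde S$ be the (finite) union of the vertices on all these paths, so that any two vertices of $S$ are joined by a path inside $\tilde S$. Put
\[
A=\max\{\,1+\sqrt{\hat d_x P(x)}\ :\ (x,y)\in\mathcal E,\ x,y\in\tilde S\,\}<\infty,\qquad A\ge 1,
\]
and let $L$ be the largest, over $p,q\in S$, of the length of a shortest path from $p$ to $q$ inside $\tilde S$. Iterating the edgewise inequality along such a path gives $u(p)\le A^{L}u(q)$ for all $p,q\in S$; taking the supremum over $p$ and the infimum over $q$ yields \eqref{inf-sup} with $C(S)=A^{L}$, a constant depending only on $S$ (through $\tilde S$), on the weights, and on $Q$, but not on $u$.

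The only point requiring a little care is the possibility that $u$ vanishes somewhere: if $u(x_0)=0$, then the edgewise inequality forces $u$ to vanish at every neighbour of $x_0$, hence $u\equiv 0$ by connectedness and \eqref{inf-sup} holds trivially; otherwise $u>0$ everywhere and the chaining is unobstructed. I do not anticipate a genuine obstacle here: the substance of the argument is just the standard passage from an infinitesimal (Cheng--Yau type) gradient bound to a path-summed Harnack comparison, transplanted to the combinatorial setting.
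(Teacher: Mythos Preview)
Your proposal is correct and follows essentially the same route as the paper's proof: both extract from Theorem~\ref{gradient} the edgewise multiplicative bound $u(y)\le\big(1+\sqrt{\hat d_x P(x)}\big)u(x)$ for $(x,y)\in\mathcal E$ and then iterate it along a finite path connecting the extremal points. Your treatment is in fact slightly more careful than the paper's, since you explicitly verify $P(x)\ge 0$, construct a finite connected $\tilde S\supset S$ to ensure the chaining constant depends only on $S$, and dispose of the case $u\equiv 0$.
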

\begin{pf}
We may assume that
$$
\inf_S u(x)=u(x_1),\ \  \sup_S u(x)=u(y_1).
$$
Then we take the minimizing path $x_1\sim x_2 \sim ...\sim x_n=y_1$
in $X$. Then $n=d(x_1,y_1)\leq diam (S)$. Recall that $|\nabla
u(x_j)|^2=\sum_{(x_j,y)\in {\mathcal E}} \frac{\mu_{x_jy}}{d_{x_j}}
\big(u(y) - u(x_j)\big)^2$. Then we have
$$
|\nabla u(x_j)|^2\geq \frac{\mu_{x_jx_{j+1}}}{d_{x_j}}
\big(u(x_{j+1}) - u(x_j)\big)^2.
$$
Hence, by ().
$$
u(x_{j+1})\leq u(x_j)+\sqrt{\frac{d_{x_j}}{\mu_{x_jx_{j+1}}}}|\nabla
u(x_j)|\leq [1+\sqrt{\hat{d}_j P(x_j)}]u(x_j).
$$
By induction, we get the bound (\ref{inf-sup}).
\end{pf}

As an easy consequence, we can get the compactness result below.

\begin{theorem}\label{compactness} Assume that $u_j,\ Q_j \in \ell(X)$
with $u_j>0$ and $Q_j$ locally uniformly bounded in $X$,
such that $-\Delta u_j + Q_j u_j = 0$. Assume that
$$
Q_\infty(x)=\sup \lim_{j\to\infty} Q_j(x).
$$
Then for any fixed point $x_0\in X$, setting
$\check{u}_j(x)=u_j(x)/u_j(x_0)$, we can take a sub-convergent
sequence, still denoted by ($\check{u}_j$) with its limit
$\check{u}_\infty>0$ such that
\[\label{limit}
-\Delta \check{u}+Q_\infty(x)\check{u}_\infty=0, \ \  in \ \ X.
\]
\end{theorem}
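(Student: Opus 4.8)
The plan is to run a standard diagonal/normal-families argument, using the local uniform bounds on the $Q_j$ together with the Harnack inequality of Theorem \ref{harnack} to control the normalized functions $\check u_j$ on a fixed exhaustion of $X$ by finite sets. First I would fix an exhaustion $S_1\subset S_2\subset\cdots$ of $X$ by finite connected sets with $x_0\in S_1$ and $\bigcup_k S_k=X$ (this uses that $X$ is locally finite and connected). For each $k$, since $Q_j$ is uniformly bounded on $S_k$ (say $|Q_j|\le M_k$ on $S_k$ for all $j$), the constant $C(S_k)$ produced in Theorem \ref{harnack} depends only on $\mathrm{diam}(S_k)$ and on the bounds $\hat d_x$, $M_k$ over $S_k$, hence can be taken uniform in $j$. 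Applying \eqref{inf-sup} to $u_j$ on $S_k$ and dividing by $u_j(x_0)$ gives $C(S_k)^{-1}\le \check u_j(x)\le C(S_k)$ for all $x\in S_k$ and all $j$; in particular $\check u_j(x_0)=1$ keeps the normalization alive and the limit will be positive.

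Next I would extract a convergent subsequence. Because each $\check u_j$ takes values in a compact interval at every vertex, and $X$ is countable, a Cantor diagonal argument produces a subsequence (still written $\check u_j$) converging pointwise on all of $X$ to some function $\check u_\infty$ with $C(S_k)^{-1}\le \check u_\infty\le C(S_k)$ on $S_k$; hence $\check u_\infty>0$ everywhere and $\check u_\infty(x_0)=1$. Then I would pass to the limit in the equation. Fix $x\in X$; since the graph is locally finite, the sum defining $(\Delta \check u_j)(x)=\sum_{(x,y)\in\mathcal E}\frac{\mu_{xy}}{d_x}(\check u_j(y)-\check u_j(x))$ is finite with a fixed index set, so $(\Delta\check u_j)(x)\to(\Delta\check u_\infty)(x)$. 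Also $\check u_j(x)\to\check u_\infty(x)$, and along a further subsequence (diagonalized again over the countably many vertices) $Q_j(x)\to$ some limit which, by definition of $Q_\infty$ as the $\limsup$, we may arrange to equal $Q_\infty(x)$ — or, more carefully, one passes to the limit using $-\Delta\check u_j(x)=-Q_j(x)\check u_j(x)$ and the chosen subsequence along which $Q_j(x)$ converges, then identifies the limit with $Q_\infty(x)$. Dividing the relation $-\Delta u_j+Q_j u_j=0$ by $u_j(x_0)$ shows $-\Delta\check u_j+Q_j\check u_j=0$ for every $j$, and taking $j\to\infty$ yields $-\Delta\check u_\infty+Q_\infty\check u_\infty=0$ on $X$, which is \eqref{limit}.

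The main obstacle is the treatment of the potentials: the hypothesis only says $Q_\infty(x)=\limsup_j Q_j(x)$, so $Q_j(x)$ need not converge, and a single subsequence making $\check u_j$ converge need not make $Q_j$ converge to $Q_\infty$. The honest fix is to choose the subsequence by diagonalization so that $Q_j(x)$ converges at every vertex $x$ to some $\tilde Q(x)\le Q_\infty(x)$, obtain $-\Delta\check u_\infty+\tilde Q\check u_\infty=0$, and then note that for the statement as written one should really take the full $\limsup$ by a further selection, or simply observe that after passing to a subsequence one may assume $\lim_j Q_j(x)$ exists and renaming it $Q_\infty(x)$ is consistent with the stated definition; I would flag this minor imprecision and proceed with the diagonal subsequence along which all relevant quantities converge. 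A secondary point worth a sentence is the uniformity of $C(S_k)$ in $j$: it follows by inspecting the proof of Theorem \ref{harnack}, where the constant is a product of factors $1+\sqrt{\hat d_{x_\ell}P(x_\ell)}$ along a minimizing path in $S_k$, and $P(x_\ell)=\hat d_{x_\ell}(1+Q_j(x_\ell))^2-2Q_j(x_\ell)-1$ is bounded once $Q_j$ is bounded on $S_k$.
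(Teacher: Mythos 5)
Your argument is correct and follows essentially the same route as the paper's proof: normalize at $x_0$, apply the Harnack inequality of Theorem \ref{harnack} on the sets of a finite exhaustion (with constant uniform in $j$ because $Q_j$ is locally uniformly bounded), extract a diagonal subsequence, and pass to the limit vertexwise in the locally finite sums. Your two refinements are welcome but do not change the method: you get strict positivity of $\check{u}_\infty$ directly from the two-sided Harnack bound instead of invoking the maximum principle as the paper does, and you correctly flag that a single subsequence need not make $Q_j(x)$ converge to the $\limsup$ $Q_\infty(x)$ at every vertex simultaneously --- an imprecision present in the statement itself that the paper's proof passes over silently.
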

\begin{pf}
We may assume that $u_j(x_0)=1$ after the normalization. Take the
exhaustion $\Omega_j$ of $X$ such that $X=\bigcup_j\Omega_j$,
$\Omega_j\subset\subset \Omega_{j+1}$ and $x_0\in \Omega_j$ for each
$j\geq 1$. Then take any finite set $S$. Note that $\inf_S
u_j(x)\leq 1$. By using (\ref{bound}), we have a uniform constant
$C_S>0$ such that
$$
\sup_S u_j(x)\leq C_S.
$$
Taking the diagonal convergent subsequence we may assume that
$$
u_j(x)\to u_\infty(x), \ \  in \ \  S.
$$
Then it is clear that $u_\infty(x)\geq 0$, $u_\infty(x_0)=1$, and
$u_\infty$ satisfies (\ref{limit}). By using the maximum principle
\cite{MW}, we know that $u_\infty>0$ in $X$.

\end{pf}

Recall that for any finite subgraph $D$, we can define
$\lambda_1(D,-\Delta+Q)$ by
$$
\inf\{\frac{\int_D(-\Delta u+Qu)u}{\int_D u^2}; u\not=0; u|_{\delta
D}=0\}.
$$
Take an exhaustion $\{D_j\}$ of $X$. Then
$\lambda_1(D_{j+1},-\Delta+Q)\leq \lambda_1(D_{j},-\Delta+Q)$ so
that we can define $\lambda_1(X,-\Delta+Q)=\lim \lambda_1(D_j,
-\Delta+Q)$. It is also easy to see that $\lambda_1(X)$ does not
depend on the choice of the exhaustion and then the definition is
well-done.

With the help of Kato's inequalities, we can study the existence
problem of the positive solution to the Schrodinger equation on $X$.

\begin{theorem}\label{existence}
Assume that $X$ is a locally finite connected graph with
 $\lambda_1(X,-\Delta+Q)>0$. Then there exists a positive function
 $u:X\to R_+$ such that
$$ (-\Delta+Q)u=0, \ \ \ in \ \ X.
$$
\end{theorem}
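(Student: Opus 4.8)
The plan is to construct the desired positive solution as a limit of solutions to boundary value problems on an exhaustion of $X$, using the positivity of $\lambda_1(X,-\Delta+Q)$ to guarantee solvability on each finite piece, and then invoking the compactness machinery of Theorem \ref{compactness} together with Kato's inequality and the maximum principle to pass to the limit. Concretely, fix $x_0\in X$ and an exhaustion $\{D_j\}$ of $X$ with $x_0\in D_1$ and $D_j\subset\subset D_{j+1}$. On each $D_j$, since $\lambda_1(D_j,-\Delta+Q)\geq\lambda_1(X,-\Delta+Q)>0$, the operator $-\Delta+Q$ with zero boundary condition on $\delta D_j$ is invertible on the finite-dimensional space of functions vanishing on the boundary, so we can solve a boundary value problem that produces a positive function $u_j$ on $D_j$. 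The natural choice is to solve $(-\Delta+Q)u_j=0$ in the interior of $D_j$ with $u_j$ prescribed to be a suitable positive function on $\delta D_j$ (for instance, take $u_j$ to be the unique solution with $u_j=1$ on $\delta D_j$, or solve $(-\Delta+Q)u_j=f_j\geq 0$ for an appropriate source). Positivity of $u_j$ follows from $\lambda_1>0$ via the maximum principle: if $u_j$ were somewhere nonpositive, applying the second Kato inequality \eqref{eq-kato-3} to $(u_j)_- $ or testing against $(u_j)_-$ would contradict $\lambda_1(D_j)>0$.

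The second main step is normalization and compactness. Set $\check u_j(x)=u_j(x)/u_j(x_0)$ so that $\check u_j(x_0)=1$ and $\check u_j>0$ on $D_j$. Since each $\check u_j$ solves the Schr\"odinger equation $-\Delta\check u_j+Q\check u_j=0$ on the interior of $D_j$ with locally uniformly bounded potential (here $Q$ is fixed), Theorem \ref{harnack} applies on any fixed finite set $S\subset X$: once $D_j\supset S$ and the interior of $D_j$ contains $S$, we get $\sup_S\check u_j\leq C(S)\inf_S\check u_j\leq C(S)$, using $\inf_S\check u_j\leq\check u_j(x_0)=1$ when $x_0\in S$ (and a chaining argument through $x_0$ otherwise). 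This yields a locally uniform bound on the $\check u_j$. A diagonal argument over the exhaustion then extracts a subsequence converging pointwise on all of $X$ to a limit $\check u_\infty\geq 0$ with $\check u_\infty(x_0)=1$, and since the equation only involves finite sums at each vertex (local finiteness), the limit satisfies $-\Delta\check u_\infty+Q\check u_\infty=0$ throughout $X$. Finally the maximum principle from \cite{MW} upgrades $\check u_\infty\geq 0$ with $\check u_\infty(x_0)=1$ to $\check u_\infty>0$ everywhere on the connected graph $X$.

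The main obstacle I expect is the first step: producing \emph{positive} solutions $u_j$ on the finite pieces from $\lambda_1>0$, and doing so in a way compatible with the passage to the limit. One has to be careful that the boundary data or source terms are chosen so that (a) $u_j$ is genuinely positive (not merely nonnegative and not identically zero), and (b) the normalized functions $\check u_j$ do not degenerate — i.e. that after dividing by $u_j(x_0)$ one still has a nontrivial limit. The Harnack inequality of Theorem \ref{harnack} is exactly what prevents degeneration on compact sets, but one must ensure the hypotheses of that theorem (the relevant vertices lying in the region where the equation holds, so that the minimizing path argument goes through) are met for all large $j$; handling the boundary layer of $D_j$ correctly is the delicate point. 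A clean way around this is to solve $(-\Delta+Q)u_j=\varepsilon_j\delta_{x_0}$ or $(-\Delta+Q)u_j = 1$ in $D_j$ with zero boundary values: invertibility from $\lambda_1(D_j)>0$ gives existence, the maximum principle gives $u_j>0$ in the interior, and one then proceeds with the normalization as above. I would adopt that formulation and relegate the routine verification of positivity and of the limit equation to short applications of Lemma \ref{lemma-Kato}, Theorem \ref{harnack}, and the maximum principle.
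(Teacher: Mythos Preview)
Your proposal is correct and follows essentially the same route as the paper: exhaust $X$ by finite subgraphs $D_j$, produce positive solutions $u_j$ of $(-\Delta+Q)u=0$ on each $D_j$ using $\lambda_1(D_j)>0$ together with Kato's inequality, normalize at $x_0$, and pass to the limit via the Harnack inequality and the maximum principle. The paper realizes your ``boundary value $1$'' option concretely by solving $(-\Delta+Q)v_j=-Q$ in $D_j$ with $v_j|_{\delta D_j}=0$ and setting $u_j=v_j+1$; then $u_j^-$ vanishes on $\delta D_j$, and the Kato/testing argument you sketch gives $\lambda_1(D_j)\int_{D_j}(u_j^-)^2\le\int_{D_j}[(-\Delta+Q)u_j^-]\,u_j^-\le 0$, hence $u_j^-=0$. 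One caveat: your final ``clean'' alternative of solving $(-\Delta+Q)u_j=1$ (or $=\varepsilon_j\delta_{x_0}$) with zero boundary data does \emph{not} yield a solution of the homogeneous equation, so Theorem~\ref{harnack} as stated would not apply to those $u_j$; stay with your first option, which is exactly what the paper does.
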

\begin{pf} We take the exhaustion $D_j$ of $X$ as above. Then we can
solve
$$
(-\Delta+Q)u=-Q, \ \ \ in \ \ D_j.
$$
with the Dirichlet boundary condition $u|_{\delta D_j}=0$ to get a
solution $v_j$ in $D_j$. Let $u_j=v_j+1$. Then $u_j$ satisfies
$$
(-\Delta+Q)u=0, \ \ \ in \ \ D_j.
$$
with positive Dirichlet boundary condition. By Kato's inequality we
know that
$$
(-\Delta+Q)u_j^-\leq 0.
$$
Here $u^-=\frac{1}{2}(|u|-u)$ the positive part of the function
$-u$. Then
$$
\lambda_1(D_j)\int_{D_j} u_j^{-2}\leq
\int_{D_j}[(-\Delta+Q)u_j^-]u_j^-\leq 0,
$$
which implies that $u_j^-=0$. Then by the maximum principle, $u_j>0$
in $D_j$. We normalize $u_j$ such that $u_j(x_0)=1$ as before. Using
the Harnack inequality again, we can extract a convergent
subsequence, which is still denoted by $(u_j)$ with its limit
$u_\infty$. Then $u_\infty\geq 0$, $u_\infty(x_0)=1$, and
$$
(-\Delta+Q)u_\infty=0, \ \ \ in \ \ X.
$$
Using the maximum principle, we know that $u_\infty>0$ in $X$.
\end{pf}

Using almost the same argument we can prove the result below.
\begin{theorem}\label{existences}
Assume that $X$ is a locally finite connected graph with
 $\lambda:=\lambda_1(X,-\Delta+Q)>0$. Then there exists a positive function
 $u:X\to R_+$ such that
$$ (-\Delta+Q)u=\lambda u, \ \ \ in \ \ X.
$$
\end{theorem}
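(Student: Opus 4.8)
The plan is to follow the proof of Theorem~\ref{existence}, with the inhomogeneous Dirichlet problem on each exhausting subgraph replaced by the principal eigenvalue problem. Fix a vertex $x_0\in X$ and an exhaustion $D_1\subset\subset D_2\subset\subset\cdots$ of $X$ with $x_0\in D_1$. For each $j$ set $\lambda_j:=\lambda_1(D_j,-\Delta+Q)$ and let $\phi_j$ attain the infimum $\inf\{\int_{D_j}(-\Delta u+Qu)u/\int_{D_j}u^2:\ u\neq 0,\ u|_{\delta D_j}=0\}$ defining it; such a minimizer exists because $D_j$ is finite, and it satisfies $\phi_j|_{\delta D_j}=0$ and $(-\Delta+Q)\phi_j=\lambda_j\phi_j$ at the interior vertices of $D_j$. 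By the first Kato inequality (Lemma~\ref{lemma-Kato}), replacing $\phi_j$ by $|\phi_j|$ does not increase the quotient, so we may assume $\phi_j\ge 0$, and then the maximum principle of \cite{MW} gives $\phi_j>0$ in the interior of $D_j$. Normalize so that $\phi_j(x_0)=1$. As recorded in the text, $(\lambda_j)$ is non-increasing with $\lambda_j\to\lambda:=\lambda_1(X,-\Delta+Q)>0$, so $\lambda\le\lambda_j\le\lambda_1(D_1)$ for every $j$.

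Next I would prove a uniform local bound on the $\phi_j$. Writing the eigenvalue equation as $-\Delta\phi_j+(Q-\lambda_j)\phi_j=0$ with $\phi_j\ge 0$, Theorem~\ref{gradient} --- whose conclusion is pointwise --- applies at every interior vertex of $D_j$ with potential $Q-\lambda_j$ and gives $|\nabla\phi_j|^2(x)\le P_j(x)\phi_j^2(x)$, where $P_j(x)=\hat{d}_x(1+Q(x)-\lambda_j)^2-2(Q(x)-\lambda_j)-1$. Since every vertex has finitely many neighbours (so each $\hat{d}_x$ is finite), $Q$ is finite at each vertex, and all the $\lambda_j$ lie in the fixed bounded interval $[\lambda,\lambda_1(D_1)]$, the numbers $P_j(x)$ are bounded on any prescribed finite subset of $X$, uniformly in $j$. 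Chaining this gradient bound along minimizing paths exactly as in the proof of Theorem~\ref{harnack} then gives, for each finite $S\subset X$ with $x_0\in S$, a constant $C_S$ independent of $j$ such that $\sup_S\phi_j\le C_S\inf_S\phi_j\le C_S\phi_j(x_0)=C_S$, valid once $D_j$ contains $S$ together with the relevant minimizing paths.

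With these uniform bounds in hand, a diagonal extraction over the exhaustion yields a subsequence, still written $(\phi_j)$, that converges pointwise on $X$ to some $\phi_\infty\ge 0$ with $\phi_\infty(x_0)=1$. Because $\Delta$ acts locally, for each fixed vertex $x$ the identity $(-\Delta\phi_j)(x)+Q(x)\phi_j(x)=\lambda_j\phi_j(x)$ holds for all large $j$, and letting $j\to\infty$ (using $\lambda_j\to\lambda$) gives $(-\Delta+Q)\phi_\infty=\lambda\phi_\infty$ throughout $X$. Finally $\phi_\infty\ge 0$ is not identically zero and solves $-\Delta\phi_\infty+(Q-\lambda)\phi_\infty=0$, so by the maximum principle of \cite{MW} --- or directly, since at a zero of $\phi_\infty$ one has $0=\Delta\phi_\infty(x)=\sum_{(x,y)\in\mathcal{E}}\frac{\mu_{xy}}{d_x}\phi_\infty(y)$, forcing $\phi_\infty$ to vanish at every neighbour and hence everywhere by connectedness --- we conclude $\phi_\infty>0$ on $X$. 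This $\phi_\infty$ is the desired function $u$.

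The main obstacle is the uniformity of the constants as $j\to\infty$: one must be certain that neither the gradient coefficient $P_j$ nor the resulting Harnack constant $C_S$ degenerates, which is exactly where local finiteness of the graph and the hypothesis $\lambda>0$ --- via $\lambda_j\ge\lambda$, keeping the eigenvalues bounded and the eigenfunctions strictly positive --- come in. A secondary point to handle carefully is the existence of the minimizer $\phi_j$ on each finite $D_j$ and the reduction to a strictly positive one via Kato's first inequality and the maximum principle, just as in the proof of Theorem~\ref{existence}. (If one prefers, on those $D_j$ with $\lambda_j>\lambda$ one may instead solve $(-\Delta+Q-\lambda)v_j=\lambda-Q$ with $v_j|_{\delta D_j}=0$ and apply the Kato argument to $v_j+1$ verbatim as in Theorem~\ref{existence}; the eigenfunction version has the advantage of also covering the borderline case $\lambda_j=\lambda$.)
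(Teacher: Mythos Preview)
Your proposal is correct and follows precisely the route the paper intends: the paper omits the proof, saying only that it uses ``almost the same argument'' as Theorem~\ref{existence}, and you have carried this out with the natural adaptation of taking principal Dirichlet eigenfunctions $\phi_j$ on $D_j$ (with eigenvalues $\lambda_j\searrow\lambda$) in place of the inhomogeneous solutions $u_j$. The uniform Harnack bound via Theorem~\ref{gradient} with potential $Q-\lambda_j$, the diagonal extraction, and the maximum-principle upgrade to strict positivity are exactly the steps the paper's scheme requires; your parenthetical alternative mirroring Theorem~\ref{existence} verbatim is also fine whenever $\lambda_j>\lambda$.
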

We may omit the detailed proof of Theorem \ref{existences}.

We can generalize Theorem \ref{existence} into the following form.
\begin{theorem}\label{existence2}
Assume that $X$ is a locally finite connected graph with
 $\lambda_1(X,-\Delta+Q)>0$.  Then for any $f\geq 0$ in $L^1(X)$,there exists a positive function
 $u:X\to R_+$ such that
$$ (-\Delta+Q)u=f, \ \ \ in \ \ X.
$$
\end{theorem}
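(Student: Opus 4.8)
The plan is to mimic the proof of Theorem~\ref{existence}, replacing the constant ``$1$'' by a suitable positive solution and the forcing term ``$-Q$'' by the given data $f$. Concretely, take an exhaustion $\{D_j\}$ of $X$ with $x_0\in D_1$. On each $D_j$ solve the Dirichlet problem
\[
(-\Delta+Q)w_j=f \ \text{ in } D_j,\qquad w_j|_{\delta D_j}=0,
\]
which is uniquely solvable because $\lambda_1(D_j,-\Delta+Q)\ge\lambda_1(X,-\Delta+Q)>0$ makes the finite-dimensional operator $-\Delta+Q$ invertible on functions vanishing on $\delta D_j$. Since $f\ge0$, Kato's inequality (Lemma~\ref{lemma-Kato}) applied to $w_j^-$ gives $(-\Delta+Q)w_j^-\le -\,\mathrm{sign}_+(-w_j)\,f\le 0$ on $D_j$, so that
\[
\lambda_1(D_j)\int_{D_j}(w_j^-)^2\le\int_{D_j}\big[(-\Delta+Q)w_j^-\big]w_j^-\le 0,
\]
forcing $w_j^-\equiv0$, i.e. $w_j\ge0$ on $D_j$; the maximum principle then upgrades this to $w_j>0$ on the interior (at least where $f$ is not identically zero near $x_0$, which we may arrange, or else argue as below).

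Next I would add a small positive solution of the homogeneous equation to keep everything strictly positive and to get a uniform normalization. Let $h$ be the positive solution of $(-\Delta+Q)h=0$ on $X$ from Theorem~\ref{existence}, and set $u_j=w_j+\varepsilon h$ for a fixed $\varepsilon>0$; then $u_j>0$ on $D_j$ (after extending $w_j$ by $0$, or working on the interior of $D_j$) and still solves $(-\Delta+Q)u_j=f$ there. Normalize by $\check u_j=u_j/u_j(x_0)$. The point of this step is that now $\check u_j(x_0)=1$, and I can invoke the Harnack inequality of Theorem~\ref{harnack} --- but note this requires a solution of the \emph{homogeneous} Schr\"odinger equation, whereas $u_j$ solves an inhomogeneous one. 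This is the main obstacle: Theorem~\ref{harnack} and the gradient estimate Theorem~\ref{gradient} were proved only for solutions of $-\Delta u+Qu=0$. To get around it, I would instead absorb $f$ into the potential: on the set where $\check u_j>0$, write $f=\frac{f}{\check u_j}\check u_j$ and set $\tilde Q_j=Q-\frac{f}{u_j}$, so that $(-\Delta+\tilde Q_j)u_j=0$; since $0\le f/u_j\le f/(\varepsilon h)$ is locally bounded on each fixed finite set once we have a lower bound on $u_j$, the family $\tilde Q_j$ is locally uniformly bounded, and Theorem~\ref{compactness} applies directly.

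Finally I would run the compactness argument: using Theorem~\ref{compactness} (or a direct diagonal extraction using the gradient bound~(\ref{bound}) for the $\tilde Q_j$), pass to a subsequence with $\check u_j\to u_\infty$ pointwise on $X$, where $u_\infty\ge0$, $u_\infty(x_0)=1$. Passing to the limit in $(-\Delta+Q)u_j=f$ after dividing by $u_j(x_0)$ gives $(-\Delta+Q)u_\infty=\big(\lim u_j(x_0)^{-1}\big)f=:c\,f$ for some $c\ge0$; one must check $c>0$ (equivalently that $u_j(x_0)$ stays bounded), which follows from the Harnack/gradient estimate bounding $u_j$ on $D_1$ in terms of $\inf_{D_1}u_j\ge\varepsilon\inf_{D_1}h>0$. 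Rescaling $u_\infty$ by $1/c$ yields the desired positive solution of $(-\Delta+Q)u=f$ on $X$, and the maximum principle \cite{MW} gives strict positivity. The delicate points to watch are the uniform lower bound $u_j\ge\varepsilon h$ near $x_0$ (needed to control $f/u_j$) and the nonvanishing of the limiting normalization constant $c$; both are handled by the interior lower bounds coming from the maximum principle together with $f\in L^1(X)$, which guarantees the inhomogeneous term does not blow up in the limit.
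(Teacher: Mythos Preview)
Your approach is substantially more elaborate than the paper's and contains a real gap at the decisive step. The paper bypasses Harnack and the modified potentials entirely: after solving $(-\Delta+Q)u_j=f$ on $D_j$ with zero boundary data and checking $u_j\ge 0$, it simply tests the equation against $u_j$ itself to obtain
\[
\lambda_1(X)\int_{D_j}u_j^{\,2}\ \le\ \int_{D_j}\big[(-\Delta+Q)u_j\big]\,u_j\ =\ \int_{D_j} f\,u_j,
\]
and then Cauchy--Schwarz gives a uniform $L^2$ bound on $u_j$ independent of $j$. On a locally finite graph this is already a uniform pointwise bound on every finite set, so a diagonal subsequence converges to some $u_\infty\ge 0$ solving $(-\Delta+Q)u_\infty=f$, and the maximum principle upgrades this to $u_\infty>0$.

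The gap in your argument is precisely the claim that $u_j(x_0)$ stays bounded (equivalently $c>0$). You justify it by saying Harnack bounds $u_j$ on $D_1$ ``in terms of $\inf_{D_1}u_j\ge\varepsilon\inf_{D_1}h>0$'', but Theorem~\ref{harnack} only yields $\sup_{D_1}u_j\le C\,\inf_{D_1}u_j$, and a \emph{lower} bound on $\inf_{D_1}u_j$ gives no upper bound on $\sup_{D_1}u_j$. Nothing else in your scheme prevents $w_j(x_0)\to\infty$ (monotonicity of $w_j$ in $j$ even suggests this is the scenario to worry about), and the vague appeal to $f\in L^1(X)$ at the end does not supply such a bound either. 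The only estimate available that does rule it out is exactly the energy inequality above --- and once you have that, the detour through $h$, the perturbed potentials $\tilde Q_j=Q-f/u_j$, and Theorem~\ref{compactness} are all unnecessary, since the uniform $L^2$ bound lets you pass to the limit directly.
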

\begin{pf} We may assume that $f$ is nontrivial. We take the exhaustion $D_j$ of $X$ as above. Using the Dirichlet principle, we can
solve
$$
(-\Delta+Q)u=f, \ \ \ in \ \ D_j.
$$
with the Dirichlet boundary condition $u|_{\delta D_j}=0$ to get a
non-negative solution $u_j$ in $D_j$.  Then
$$
\lambda_1(X)\int_{D_j} u_j^{2}\leq
\int_{D_j}[(-\Delta+Q)u_j]u_j=\int_{D_j}fu_j.
$$
Using the Cauchy-Schwartz inequality we have $\int_{D_j}u_j^2 \leq
\lambda_1(X)^{-2}\int_X f^2$. Then we can extract a convergent
subsequence in $L^2_{loc}(X)$, which is still denoted by $(u_j)$
with its limit $u_\infty$. Then $u_\infty\geq 0$, and
$$
(-\Delta+Q)u_\infty=f, \ \ \ in \ \ X.
$$
Since $f$ is nontrivial, we know that $u_\infty$ is non-trivial.
Using the maximum principle, we know that $u_\infty>0$ in $X$.
\end{pf}

\section{Green functions}\label{sect3}

In this section we find conditions such that there is a global Green
function on $X$. Similar results in a complete non-compact
Riemannian manifold are well-known \cite{SY}.

We first recall the construction of the Green function on a
transient Markov process on an infinite connected graph $X$. Define
a transition probability $p(x,y):\ X^2 \to {\Bbb R}$ as follow
\[
p(x,y) = \left\{ \begin{array}{ll} \frac{\mu_{xy}}{d_x}, \quad & \mbox{ if } (x,y)\in {\mathcal E};\\
0, & \mbox{ otherwise.} \end{array}\right.
\]

We define the Markov operator $P:\ \ell(X) \longrightarrow \ell(X)$
as follow
\[
Pf(x) = \sum_{y \in X} p(x,y) f(y).
\]
Simply to understand this is treating $P$ as a infinite matrix $(
p(x,y))_{x,y\in X}$ and a function $f\in \ell(X)$ as a collum
vector. Then the Markov operator on a function $f$ is the product of
$P$ and $f$.

For each $n\ge 0$, we define $n$ step transition probability
inductively by
\[
p_n(x,y) = \sum_{z\in X} p_{n-1}(x,z) p(z,y) = \sum_{z\in X} p(x,z)
p_{n-1}(z,y),
\]
here $p_0(x,y) : = \delta_x(y)$ and $p_1(x,y) := p(x,y)$. It is
clear that we can write this to a matrix form $P^n =
(p_n(x,y))_{x,y\in X}$. We assume that the Markov process is {\it
transient}, i.e.,
\[ \label{markov}
\sum_{n=0}^\infty p_n(x,y) < \infty
\]
for all $x,y \in X$. Then we can define the Green operator
\[
G = \sum_{n=0}^\infty P^n
\]
(Recall that $P^0 = I$ the identity matrix). It follows that
\[
G = I + P G, \quad \mbox{ or} \quad  (I-P)G = I.
\]

The {\it Green function } $G(x,y)$ is the $(x,y)$-coordinate of the
matrix $G$, i.e., $G= \big( g(x,y) \big)_{x,y\in X}$. Recall that
{\it the Laplacian operator} $\Delta = P - I$, i.e., for a function
$f\in \ell(X),$
\[
\Delta f (x) = (P-I) f (x) =  P f (x) - f(x) = \sum_{(x,y)\in
{\mathcal E}} \frac{\mu_{xy}}{d_x} (f(y)- f(x)).
\]

Again we let $X$ be a locally finite, connected graph. Let $S$ be a
finite subset of $V$, the {\it subgraph} $G(S)$ generated by $S$ is
a graph, which consists of the vertex-set $S$ and all the edges $x
\sim y, \ x,y \in S$ as the edge set.  The boundary $\delta S$ of
the induced subgraph $G(S)$ consists of all vertices that are not in
$S$ but adjacent to some vertex in $S$. We assume that the subgraph
$G(S)$ is connected. Sometimes people may like to write
$$
\bar{S}=S\bigcup \delta S.
$$
By the above construction we can easily get the Green function $G_S$
on $S$ with $G_S(\cdot, y)|_{\delta S}=0$ for any $y\in S$. Denote
by $B(R)$ be the ball of radius $R>0$ with center at some fixed
point $x_0\in X$. Take any exhaustion $\Omega_j=B(j)$ of $X$ as in
the proof of Theorem \ref{compactness} and let $G_j:=G_{\Omega_j}$
be the Green function on $\Omega_j$ with Dirichlet boundary
condition. Note that $0\leq G_j\leq G_{j+1}$ on $X$ for any $j\geq
1$.

\begin{theorem}\label{Green}
 Assume that there exists a positive function $\phi:X\to R_+$ such
 that $\Delta \phi\leq 0$ and $\phi(x)\to 0$ as $d(x,x_0)\to
 \infty$. Then the limit $G(x,y)=\lim G_j(x,y)$ exists for any $x\not= y\in X$
 and $G(x,y)$ is the green function on $X$.
\end{theorem}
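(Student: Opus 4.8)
The plan is to show that the increasing sequence $G_j(x,y)$ is bounded above, so that the limit exists, and then to verify that the limit inherits the defining properties of a Green function. Fix $x_0\in X$ and recall $0\le G_j\le G_{j+1}$, so for each pair $(x,y)$ the sequence $j\mapsto G_j(x,y)$ is nondecreasing; monotone convergence (to a possibly infinite value a priori) is automatic. The real content is the uniform bound, and this is where the superharmonic barrier $\phi$ enters. First I would fix $y\in X$ and consider, on each $\Omega_j=B(j)$ containing $y$, the function $w_j(x):=G_j(x,y)$. By construction $(I-P)G_j=I$ on $\Omega_j$ with zero boundary values, i.e. $-\Delta w_j(\cdot)=\delta_y(\cdot)/\,$(appropriate normalization) on $\Omega_j$ and $w_j=0$ on $\delta\Omega_j$; in particular $-\Delta w_j\le 0$, so $w_j$ is subharmonic on $\Omega_j\setminus\{y\}$.

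Next I would run a comparison argument against a suitable multiple of $\phi$. Since $\phi>0$ on the finite set $\overline{\{y\}}\cup(\text{a fixed neighborhood of }y)$ and $G_j(\cdot,y)$ at $y$ is controlled independently of $j$ — indeed $G_j(y,y)\le G(y,y):=\sum_n p_n(y,y)<\infty$ by the transience assumption \eqref{markov}, or more carefully one first bounds $\sup_j G_j(y,y)$ using the probabilistic meaning of $G_j$ as expected number of visits before exiting $\Omega_j$, which is dominated by the full Green's operator value — there is a constant $A=A(y)$ with $w_j(y)\le A$ for all $j$. Set $h_j:=w_j-\frac{A}{\phi(y)}\phi$. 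On $\Omega_j\setminus\{y\}$ we have $-\Delta h_j=-\Delta w_j+\frac{A}{\phi(y)}\Delta\phi\le 0$, so $h_j$ is subharmonic there; on the boundary $\delta\Omega_j$, $h_j=-\frac{A}{\phi(y)}\phi<0$; and at the puncture $y$, $h_j(y)=w_j(y)-A\le 0$. By the maximum principle on the finite graph $\Omega_j$ (applied to the subharmonic $h_j$ on $\Omega_j\setminus\{y\}$ with the two "boundary" pieces $\{y\}$ and $\delta\Omega_j$), we conclude $h_j\le 0$, i.e.
\[
G_j(x,y)\le \frac{A(y)}{\phi(y)}\,\phi(x),\qquad\forall x\in\Omega_j,\ \forall j.
\]
This bound is uniform in $j$, hence $G(x,y):=\lim_j G_j(x,y)$ exists and is finite, and moreover $G(x,y)\le \frac{A(y)}{\phi(y)}\phi(x)\to 0$ as $d(x,x_0)\to\infty$.

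Finally I would check that $G(x,y)$ is genuinely the Green function on $X$, i.e. that $(I-P)G=I$. For each fixed $x$, the relation $(I-P_{\Omega_j})G_j=I$ reads $G_j(x,y)-\sum_{z}p(x,z)G_j(z,y)=\delta_x(y)$ for $x$ in the interior of $\Omega_j$; since $X$ is locally finite the sum over $z$ is finite and, by monotone convergence in $j$, passes to the limit, giving $G(x,y)-\sum_z p(x,z)G(z,y)=\delta_x(y)$ for every $x$. Equivalently $-\Delta_x G(x,y)=\delta_x(y)$, which is exactly the defining property, and the decay $G(\cdot,y)\to 0$ singles it out as the minimal (global) Green function. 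Symmetry $G(x,y)=G(y,x)$ follows from symmetry of each $G_j$, which in turn comes from reversibility of the walk with respect to the measure $d_x$ (equivalently $d_x\,G_j(x,y)=d_y\,G_j(y,x)$, and one keeps track of the weights accordingly). The main obstacle is the uniform bound on $G_j(y,y)$: one must argue, before invoking the barrier, that the diagonal values do not blow up — this is where transience \eqref{markov} is essential, and it is cleanest to phrase it probabilistically, $G_j(y,y)=\mathbb{E}_y[\#\{\text{visits to }y\text{ before exiting }\Omega_j\}]\le\sum_{n\ge0}p_n(y,y)<\infty$. With that in hand the comparison step and the passage to the limit are routine.
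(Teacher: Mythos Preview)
Your argument is essentially correct but follows a genuinely different route from the paper, and it has one gap you should close.

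\medskip
\textbf{The gap.} You bound $G_j(y,y)$ by invoking ``the transience assumption \eqref{markov}''. But transience is \emph{not} a hypothesis of the theorem; the only hypothesis is the existence of the positive superharmonic $\phi$ with $\phi\to 0$ at infinity. You must therefore first deduce transience from $\phi$. This is classical --- on a recurrent graph every nonnegative superharmonic function is constant, whereas your $\phi$ is positive, superharmonic and nonconstant --- but it is a genuine step and you should state it. Note also that once transience is in hand, you already have $G_j(x,y)\uparrow \sum_{n\ge 0}p_n(x,y)<\infty$ for every pair, so your barrier comparison, while correct, is then only buying you the extra decay $G(\cdot,y)\lesssim \phi$, not the existence of the limit.

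\medskip
\textbf{Comparison with the paper's proof.} The paper does \emph{not} appeal to transience at all. It argues by contradiction: assuming $m_j:=\max_{\delta B_y(R_0)}G_j(\cdot,y)\to\infty$, it normalizes $v_j:=\epsilon\,m_j^{-1}G_j(\cdot,y)$ so that $\sup_{\delta B_y(R_0)}v_j=\epsilon$, compares $v_j$ against $\phi$ on the exterior $B(j)\setminus B_y(R_0)$ and against $\epsilon+\mu\,G_J(\cdot,y)$ on the punctured ball, and then uses the Harnack inequality from Section~\ref{sect2} to extract a subsequential limit $\tilde v$. This $\tilde v$ is harmonic on $X\setminus\{y\}$, bounded above by $\phi$, and attains an interior maximum equal to $\epsilon$; the strong maximum principle forces $\tilde v\equiv\epsilon$, contradicting $\tilde v\le \phi\to 0$. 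So the paper's route is self-contained within the tools of the paper (barrier $+$ Harnack $+$ compactness $+$ strong maximum principle), while yours is more elementary --- a single maximum-principle comparison, no Harnack, no subsequences --- at the cost of importing the superharmonic-implies-transient characterization from outside.
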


\begin{proof} We only need to show that $G_j(x,y)$ is uniformly
bounded for any $x\not y \in X$. That is to say, there exists an
uniform constant $C(d(x,y))$ such that
\[ \label{bbc}
G_j(x,y)\leq C(d(x,y)).
\]
Assume not. Then we have some constant $R_0>0$ such that
$$
m_j=\max \{G_j(x,y); x\in \delta B_y(R_0)\}\to \infty
$$
as $j\to\infty$. Here $B_y(R_0)=\{x\in X, d(x,y)<R_0\}$ is the ball
at center $y$. Note that $\sup_{\delta B_y(R_0)}v_j=\epsilon$. Let
$\epsilon>0$ be a small constant and let $j>R_0$.

  Define
  $$
v_j(x)=\epsilon m_j^{-1}G_j(x,y).
  $$
Then $0\leq v_j(x)\leq \epsilon$ for $x\in B(j)\backslash B_y(R_0)$.
We take $\epsilon>0$ small enough such that
$$
v_j(x)\leq \phi(x), \ \ on \ \delta B_y(R_0).
$$
Since $\phi(x)>0=G_j$ on $\delta B(j)$, by the maximum principle, we
know that
$$
v_j(x)\leq \phi(x), \ \ x\in B(j)\backslash B_y(R_0).
$$
Take some fixed ball $B(J)$ with $B_y(R_0)\subset B(J)$. Take any
$\mu>0$ small and choose $\epsilon>0$ small enough such that
$\epsilon m_j<\mu$. Then we use the maximum principle again to know
that
\[ \label{bb}
v_j(x)\leq \epsilon+ \mu G_J(x,y), \ \ x\in B_y(R_0)\backslash
\{y\}.
\]
Note that
$$
\Delta v_j=0, \ \ in \ B_y(j).
$$

Using the Harnack inequality (\ref{bound}) we can extract a
convergent subsequence $v_j$ with its limit $\tilde{v}$ satisfying
both $\Delta v=0$  and $v(x)\leq \phi(x)$ on $X\backslash \{y\}$. By
(\ref{bb}) we know that
$$
\tilde{v}(x)\leq \epsilon+ \mu G_J(x,y).
$$
Sending $\mu\to 0$ we get that $\tilde{v}(x)\leq
\epsilon=\sup_{\delta B_y(R_0)}\tilde{v}$ on $B_y(R_0)\backslash
\{y\}$. The latter equality implies that the harmonic function
$\tilde{v}$ obtains its maximum at the interior point in
$X\backslash \{y\}$. By the strong maximum principle we know that
$\tilde{v}=\epsilon$, which is a contradiction with the fact that
$\tilde{v}(x)\leq \phi(x)\to 0$ as $x\to\infty$. Therefore, the
bound (\ref{bbc}) is true. Hence the limit of $G_j$ exists and it is
the minimal Green function on $X$.
\end{proof}

Similar to the argument above we can obtain the following result.

\begin{theorem}\label{Green2}
 Assume that there exists a positive function $\phi:X\to R_+$ such
 that $\Delta \phi\leq 0$ and $\int_X \phi^p<\infty$ for some $p>1$.
 Then the limit $G(x,y)=\lim G_j(x,y)$ exists for any $x\not= y\in X$
 and $G(x,y)$ is the green function on $X$.
\end{theorem}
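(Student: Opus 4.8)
The plan is to re-run the argument of Theorem \ref{Green} almost verbatim, letting the integrability hypothesis $\int_X\phi^p<\infty$ take over the role played there by the decay $\phi(x)\to0$; this substitution matters only at the very last step. As in Theorem \ref{Green}, since $0\le G_j\le G_{j+1}$, everything reduces to proving that for each fixed $x\ne y$ the sequence $(G_j(x,y))_j$ is bounded; once that is known, $G(x,y)=\lim_j G_j(x,y)$ exists and, passing to the limit in $(I-P)G_j=I$ on $\Omega_j$, one identifies $G$ as the minimal Green function on $X$.

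To prove the boundedness I would argue by contradiction, reproducing the construction of Theorem \ref{Green}. Assuming unboundedness one obtains $R_0>0$ with $m_j:=\max\{G_j(x,y):x\in\delta B_y(R_0)\}\to\infty$ and sets $v_j:=\epsilon m_j^{-1}G_j(\cdot,y)$, where $\epsilon>0$ is fixed so small that $\epsilon\le\min_{\delta B_y(R_0)}\phi$ (possible because $\delta B_y(R_0)$ is finite and $\phi>0$). Since $G_j(\cdot,y)$ is harmonic on $B(j)\setminus\{y\}$, vanishes on $\delta B(j)$, and is at most $m_j$ on $\delta B_y(R_0)$, the maximum principle gives $0\le v_j\le\epsilon$ on $B(j)\setminus B_y(R_0)$, and, comparing with $\phi$ via $\Delta\phi\le0$, also $v_j\le\phi$ there; near the pole one controls $v_j$ by $\epsilon+\mu G_J(\cdot,y)$ on $B_y(R_0)\setminus\{y\}$, for a fixed $J$ with $B_y(R_0)\subset B(J)$ and $\mu>0$ small, exactly as in Theorem \ref{Green}. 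The local Harnack inequality of Theorem \ref{harnack} (with $Q\equiv0$, on finite subsets of $X\setminus\{y\}$, with constants independent of $j$) then furnishes local uniform bounds, so a subsequence of $(v_j)$ converges pointwise to a function $\tilde v\ge0$ that is harmonic on $X\setminus\{y\}$, satisfies $\tilde v\le\phi$ off $B_y(R_0)$, and, after sending $\mu\to0$, satisfies $\tilde v\le\epsilon$ on all of $X\setminus\{y\}$. Relabelling so that the maximum defining $m_j$ is attained at a fixed vertex $x_\ast\in\delta B_y(R_0)$ lying in the unbounded part of $X$, we have $\tilde v(x_\ast)=\epsilon$, so the strong maximum principle forces $\tilde v\equiv\epsilon$ on the infinite component of $X\setminus\{y\}$ containing $x_\ast$.

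The one genuinely new point is the final contradiction. Since $\tilde v\equiv\epsilon$ on that infinite component while $\tilde v\le\phi$ off $B_y(R_0)$, we get $\phi(x)\ge\epsilon>0$ at infinitely many vertices $x$, so
$$
\int_X\phi^p\,dx\ \ge\ \epsilon^p\sum_{x\in E}d_x\ =\ \infty
$$
for the infinite set $E$ of those vertices, contradicting $\int_X\phi^p<\infty$; hence $(G_j(x,y))_j$ is bounded and the theorem follows. I expect this last step to be where the most care is needed. In Theorem \ref{Green} the relation $\phi(x)\to0$ contradicts $\tilde v\equiv\epsilon$ immediately, whereas here the contradiction must be quantified against the $L^p$ bound, and it relies on the relevant infinite part of $X$ having infinite total mass $\sum_x d_x=\infty$; under that mass condition the exponent $p>1$ does not appear to be needed for this argument. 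One must also confirm, as already implicitly in Theorem \ref{Green}, that the limit $\tilde v$ is genuinely harmonic across $\delta B_y(R_0)$, that the comparison near the pole $y$ (the interplay of $\epsilon$, $\mu$ and $J$) survives the passage to the limit, and that $x_\ast$ can indeed be taken in an infinite component of $X\setminus\{y\}$. Modulo these points the proof is identical to that of Theorem \ref{Green}.
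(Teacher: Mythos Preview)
Your proposal is correct and follows exactly the route the paper intends: the paper omits the proof entirely, saying only that ``the argument is almost the same'' as Theorem~\ref{Green}, and your reconstruction---running the proof of Theorem~\ref{Green} verbatim and replacing the final decay contradiction by the observation that $\tilde v\equiv\epsilon\le\phi$ forces $\int_X\phi^p=\infty$---is precisely that argument. Your remarks that the contradiction tacitly needs infinite total volume and that the restriction $p>1$ is not actually used go beyond what the paper provides.
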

Since the argument is almost the same, we omit its detailed proof.

We can also prove the following result, which has its counterpart in
Riemannian geometry.

\begin{theorem}\label{Green3} Assume that $X$ is a locally finite connected graph with
 $\lambda_1(X)>0$. Then $X$ has a global Green function.
\end{theorem}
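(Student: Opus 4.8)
The plan is to reduce Theorem \ref{Green3} to Theorem \ref{Green} by producing, from the hypothesis $\lambda_1(X)>0$, a positive superharmonic function $\phi$ on $X$ that decays to zero at infinity. The natural candidate is $\phi = G_k(\cdot, x_0)$ for a fixed $k$, or better, the global positive solution whose existence is guaranteed by earlier results. Concretely, apply Theorem \ref{existence} with $Q\equiv 0$ is not available since $\lambda_1(X,-\Delta)>0$ is exactly the hypothesis, so instead I would apply Theorem \ref{existence2}: since $\lambda_1(X,-\Delta+0)=\lambda_1(X)>0$, for any $f\ge 0$ in $L^1(X)$ there is a positive $u$ with $-\Delta u = f$ in $X$, i.e. $\Delta u = -f \le 0$, so $u$ is a positive superharmonic function. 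The remaining issue is decay: a general such $u$ need not tend to $0$ at infinity, so one must either choose $f$ (e.g. $f=\delta_{x_0}d_{x_0}$, formally, or an approximation) so that $u$ is the minimal positive superharmonic function with a prescribed singularity, which is essentially $G(\cdot,x_0)$ — but that is circular.

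The cleaner route, which I expect the paper intends, is to use $\lambda_1(X)>0$ directly to get decay via a Poincaré-type inequality on the exhaustion. First I would take the exhaustion $\Omega_j=B(j)$ and the Green functions $G_j=G_{\Omega_j}$ with Dirichlet condition. Fix the pole $y$. From the defining relation $(I-P)G_j = I$ on $\Omega_j$ one has $-\Delta G_j(\cdot,y) = \delta_y$ (suitably normalized by $d_y$) on $\Omega_j$ with $G_j(\cdot,y)|_{\delta\Omega_j}=0$. Testing this equation against $G_j(\cdot,y)$ itself and using the variational characterization of $\lambda_1(\Omega_j)\ge \lambda_1(X)=:\lambda>0$ gives
\[
\lambda \sum_{x\in\Omega_j} G_j(x,y)^2 d_x \le \sum_{x\in\Omega_j}\big(-\Delta G_j(x,y)\big)G_j(x,y)d_x = G_j(y,y)d_y.
\]
Since $G_j(y,y)$ is increasing in $j$ and bounded (this boundedness is the one nontrivial input — it follows from monotonicity plus the fact that on $B_y(R_0)$ the functions are harmonic and one runs the maximum-principle/Harnack argument of Theorem \ref{Green} using $\phi$; alternatively $G_j(y,y)\le$ expected return count, finite by transience), we get a uniform $L^2$ bound $\sum_x G_j(x,y)^2 d_x \le \lambda^{-1} C$. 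A uniform $L^2(X)$ bound on a monotone increasing sequence of nonnegative functions forces $G(x,y)=\lim_j G_j(x,y)$ to be finite for every $x$ and to satisfy $G(\cdot,y)\in \ell^2$, hence $G(x,y)\to 0$ as $d(x,x_0)\to\infty$.

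So the steps in order are: (1) observe $\lambda_1(\Omega_j)\ge\lambda_1(X)>0$ along the exhaustion; (2) write the equation $-\Delta G_j(\cdot,y)=\delta_y/d_y\cdot d_y$ with Dirichlet boundary data and test against $G_j(\cdot,y)$; (3) invoke the variational inequality to bound $\sum_x G_j(x,y)^2 d_x$ by $\lambda^{-1}G_j(y,y)d_y$; (4) bound $G_j(y,y)$ uniformly in $j$ (monotone + transience, or the Harnack/maximum-principle comparison); (5) conclude $G(\cdot,y)\in\ell^2(X)$, whence it decays at infinity, whence $\phi:=G(\cdot,x_0)$ (or, to stay strictly superharmonic and positive everywhere including at its own pole, $\phi:=1+G(\cdot,x_0)$ truncated, or just use $G(\cdot,x_0)$ away from $x_0$ and patch) verifies the hypothesis of Theorem \ref{Green}; (6) apply Theorem \ref{Green} to conclude $X$ has a global Green function. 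The main obstacle is step (4): getting the uniform bound on $G_j(y,y)$ independently of $j$ without already assuming what Theorem \ref{Green} gives. I expect to handle this by the pointwise monotonicity $G_j\le G_{j+1}$ together with the observation that the $L^2$ bound in step (3) is \emph{self-improving} — if $G_j(y,y)\to\infty$ then $\sum_x G_j(x,y)^2 d_x$ would still be controlled by $\lambda^{-1}G_j(y,y)d_y$, but evaluating at $x=y$ gives $G_j(y,y)^2 d_y \le \sum_x G_j(x,y)^2 d_x \le \lambda^{-1}G_j(y,y)d_y$, i.e. $G_j(y,y)\le \lambda^{-1}$, a clean \emph{a priori} bound requiring nothing but $\lambda>0$. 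This closes the argument.
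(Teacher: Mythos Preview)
Your final argument is correct and considerably cleaner than the paper's. Testing $-\Delta G_j(\cdot,y)=\delta_y$ against $G_j(\cdot,y)$ in the variational definition of $\lambda_1(\Omega_j)\ge\lambda=\lambda_1(X)>0$ gives
\[
\lambda\sum_{x\in\Omega_j} G_j(x,y)^2 d_x \le \sum_{x\in\Omega_j}\big(-\Delta G_j(x,y)\big)G_j(x,y)d_x = G_j(y,y)d_y,
\]
and the self-improving step $G_j(y,y)^2 d_y\le \sum_x G_j(x,y)^2 d_x\le \lambda^{-1}G_j(y,y)d_y$ yields the uniform bound $G_j(y,y)\le\lambda^{-1}$ directly. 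Monotonicity of $G_j$ then gives a finite limit $G(x,y)$ for every pair, and in fact $G(\cdot,y)\in\ell^2(X)$. This already finishes the theorem; the detour through Theorem~\ref{Green} you sketch in steps (5)--(6) is unnecessary once you have the pointwise bound.

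The paper proceeds quite differently: it argues by contradiction, assuming $m_j=\max_{\delta B_y(R_0)}G_j(\cdot,y)\to\infty$, normalizes $v_j=\epsilon m_j^{-1}G_j(\cdot,y)$, and then uses a cut-off function $\eta$ supported outside $B(k)$ together with the inequality $\lambda_1(X)\int(\eta v_j)^2\le -\int\Delta(\eta v_j)\,\eta v_j$ to obtain a uniform $L^2$ bound on the limit $\tilde v$. It then invokes Theorem~\ref{Green2} to force $\tilde v\equiv\epsilon$, hence $\mathrm{Vol}(X)<\infty$, and finally shows that finite volume contradicts $\lambda_1(X)>0$ via test functions $w_j=\mathbf 1_{B(j)}$. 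Your route bypasses the contradiction, the cut-off estimate, and the appeal to Theorem~\ref{Green2} entirely, at the cost of using $G_j(\cdot,y)$ itself as the test function---which is admissible since it vanishes on $\delta\Omega_j$. The paper's argument is closer in spirit to the Riemannian analogue it cites; yours exploits the discreteness (pointwise evaluation at $y$ inside an $\ell^2$ sum) and gives the explicit quantitative bound $G(y,y)\le\lambda_1(X)^{-1}$ as a bonus.
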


\begin{proof} The proof goes as in Theorem \ref{Green}. Recall that
we still have
\begin{equation} \label{uniform} 0\leq v_j(x)\leq
\epsilon, \ \ \ x\in B(j)\backslash B_y(R_0).
\end{equation}
 Let $j>1$ be large enough. Take a fixed
constant $k\leq j/2$ such that $B_y(R_0)\subset B(k)$.  We claim
that there is a uniform constant $C>1$ such that
\begin{equation}\label{uniform2} \int_{X\backslash B(2k)} \tilde{v}^2\leq C.
\end{equation}
This together with (\ref{uniform}) implies that
$$
\int_X \tilde{v}^2<\infty.
$$
Then, using Theorem \ref{Green2} we know that $\tilde{v}=\epsilon$
on $X$. Then $Vol(X)<\infty$. Define the test function $w_j$ such
that $w_j=1$ on $B(j)$ and $w_j=0$ outside $B(j)$. Then $|\nabla
w_j|\leq 1$. By the definition of $\lambda_1(X)$, we know that
$$
\lambda_1(X)\leq \frac{\int_X |\nabla w_j|^2}{\int_X w_j^2}\leq
\frac{vol(B(j+1)\backslash B(j)}{vol(B(j))}\to 0,
$$
which is impossible.

To conclude the proof of Theorem \ref{Green3}, we now need only to
show the Claim (\ref{uniform2}). Define the test function $\eta$
such that $\eta=0$ on $B(k)$ and $\eta=1$ on $X\backslash B(2k)$,
and
$$
\eta(x)=\frac{d(x,x_0)-k}{k}, \ \ \ x\in B(2k)\backslash B(k).
$$

Then $|\nabla \eta|\leq C/k$, $|\Delta \eta|\leq c/k$, and $|\Delta
\eta^2|\leq 4/k$ for some uniform constant $C\geq 4$. Recall that
$$
\eta v_j\Delta (\eta v_j)=\eta v_j(\nabla \eta,\nabla
v_j)+v_j^2\eta\Delta\eta.
$$
The difficult term is $\eta v_j(\nabla \eta,\nabla v_j)$. Note that
by Lemma \ref{lemma2},
$$
\int_X \eta^2 |\nabla v_j|^2\leq \int_X \eta^2 \Delta v_j^2=\int_X
\Delta\eta^2  v_j^2,
$$
which is bounded by $\frac{C^2}{k^2}\int_{B(2k)\backslash
B(k)}v_j^2$. Then
$$
|\int_X\eta v_j(\nabla \eta,\nabla v_j)|\leq [\int_X\eta^2 |\nabla
v_j|^2\int_X v_j^2|\nabla \eta|^2]^{1/2}\leq
\frac{C^2}{k^2}\int_{B(2k)\backslash B(k)}v_j^2,
$$
which is again bounded by $\frac{C^2\epsilon^2}{k^2}$.

Hence,
$$
\lambda_1(X)\int_X(\eta v_j)^2\leq -\int_X (\Delta (\eta v_j),\eta
v_j)\leq \frac{2C^2\epsilon^2}{k^2}.
$$
Sending $j\to\infty$, this proves the Claim. Therefore, the proof of
Theorem \ref{Green3} is complete.
\end{proof}

We now use the Green function to study the principal eigenvalue on
$X$.
\begin{theorem}\label{eigen}
Assume that $X$ is a locally finite connected graph with the global
Green function $G(x,y)$.
 Assume that there exists a positive constant $A$ such that
 $\sup_X\int_X G(x,y)dy\leq A$.
 Then $\lambda_1(X)\geq A^{-1}$.
\end{theorem}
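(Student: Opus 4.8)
The plan is to exhibit a single positive function on all of $X$ that is a supersolution of $-\Delta-A^{-1}$, and then to transfer this into the bound $\lambda_1(D,-\Delta)\ge A^{-1}$ for every finite subgraph $D$, by means of the discrete ground-state (Picone) substitution; since $\lambda_1(X)=\lim_j\lambda_1(D_j,-\Delta)$ along an exhaustion (as recalled in Section~\ref{sect2}), the theorem follows at once.

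\emph{Step 1 (the supersolution).} Put $h(x):=\int_X G(x,y)\,dy$. By hypothesis $h\le A$ on $X$, and $h>0$ because $G(x,x)>0$. As the potential of the constant density $1$ (this being the defining property of the Green kernel as the inverse of $-\Delta$), $h$ solves $-\Delta h=\mathbf{1}$ on $X$; combined with $h\le A$ this gives
\[
-\Delta h\ \ge\ A^{-1}\,h\qquad\text{on }X,
\]
so $h$ is a positive supersolution of $-\Delta-A^{-1}$.

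\emph{Step 2 (discrete Picone).} Fix a finite connected subgraph $D$ and $\phi\in\ell(X)$ supported in $D$, $\phi\not\equiv0$ (in particular $\phi|_{\delta D}=0$). For any adjacent $x,y$ one has the elementary identity
\[
\big(\phi(x)-\phi(y)\big)^2-\big(h(x)-h(y)\big)\Big(\frac{\phi(x)^2}{h(x)}-\frac{\phi(y)^2}{h(y)}\Big)=\frac{\big(h(y)\phi(x)-h(x)\phi(y)\big)^2}{h(x)h(y)}\ \ge\ 0 ,
\]
obtained by clearing the positive denominators. Multiplying by $\mu_{xy}$, summing over all ordered adjacent pairs, and symmetrizing the right-hand term yields
\[
-\int_X(\Delta\phi)\phi\,dx=\frac12\sum_{x,y}\mu_{xy}\big(\phi(x)-\phi(y)\big)^2\ \ge\ \int_X\frac{-\Delta h}{h}\,\phi^2\,dx\ \ge\ A^{-1}\int_X\phi^2\,dx ,
\]
where at the last step $-\Delta h=\mathbf{1}$ and $h\le A$ are used. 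Since $\phi$ vanishes off $D$, the outer quantities equal $\int_D(-\Delta\phi)\phi\,dx$ and $A^{-1}\int_D\phi^2\,dx$, so the Rayleigh quotient defining $\lambda_1(D,-\Delta)$ is $\ge A^{-1}$; hence $\lambda_1(D,-\Delta)\ge A^{-1}$ for every finite $D$.

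\emph{Conclusion and main obstacle.} Taking $D=D_j$ along an exhaustion and letting $j\to\infty$ gives $\lambda_1(X)=\lim_j\lambda_1(D_j,-\Delta)\ge A^{-1}$. The only point that needs care is Step~1: one must use the Green kernel in the normalization making it the inverse of $-\Delta$, so that $h=\int_X G(x,\cdot)\,dy$ — the expected lifetime of the walk killed at infinity — genuinely satisfies $-\Delta h=\mathbf{1}$; granting that, Steps~2--3 are routine, the whole content being the edgewise identity above. One can also bypass Picone: on each finite $\Omega_j$ the Green operator $G_{\Omega_j}$ is the inverse of the Dirichlet Laplacian and is self-adjoint for $dx$, so its operator norm on $\ell^2(\Omega_j,dx)$ equals $1/\lambda_1(\Omega_j,-\Delta)$; Schur's test with weight $1$ bounds that norm by $\sup_x\int_{\Omega_j}G_{\Omega_j}(x,y)\,dy\le\sup_x\int_X G(x,y)\,dy\le A$ (using $0\le G_{\Omega_j}\le G$), and $j\to\infty$ again gives $\lambda_1(X)\ge A^{-1}$.
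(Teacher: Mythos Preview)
Your argument is correct, and your main route via a global positive supersolution and the discrete Picone identity is a genuinely different proof from the paper's. The paper works entirely on the finite exhaustions and never builds the barrier $h$: on each $\Omega_j$ it takes the positive Dirichlet eigenfunction $u_j$, writes the Green representation $u_j(x)=\lambda_1(\Omega_j)\int_{\Omega_j}G_j(x,y)\,u_j(y)\,dy$, bounds the right-hand side pointwise by $\lambda_1(\Omega_j)\,A\,\sup_{\Omega_j}u_j$, takes the supremum in $x$ and cancels to get $\lambda_1(\Omega_j)\ge A^{-1}$, and then lets $j\to\infty$. This is effectively your Schur-test alternative, but carried out in $\ell^\infty$ rather than $\ell^2$ and applied only to the single test vector $u_j$, which makes it a two-line computation. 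Your Picone route is longer but buys more: it fits the statement into the Allegretto--Piepenbrink picture (existence of a positive supersolution of $-\Delta-A^{-1}$ forces $\lambda_1(D)\ge A^{-1}$ for \emph{every} finite $D$, not merely along an exhaustion), and it yields the explicit barrier $h$ that one could reuse for comparison arguments. The normalization caveat you flag in Step~1 is real under the paper's conventions $\Delta=P-I$ and $\int_X f\,dy=\sum_y f(y)d_y$: one actually gets $(-\Delta h)(x)=d_x$ rather than the constant $1$, so the inequality $-\Delta h\ge A^{-1}h$ needs the matching interpretation of the hypothesis; the paper's own proof is equally informal on this point, and the substance of either argument is unaffected once the normalizations are aligned.
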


\begin{proof} Take any exhaustion $\{\Omega_j\}$ of $X$ as in
the proof of Theorem \ref{compactness} and let $G_j:=G_{\Omega_j}$
be the Green function on $\Omega_j$ with Dirichlet boundary
condition. According to the construction of the Green function, we
know that $G(x,y)=\lim G_j(x,y)$. By the assumption, we know that$$
\sup_X\int_{\Omega_j} G_j(x,y)dy\leq A.
$$
Recall that $\lambda_1(X)=\lim \lambda_1(\Omega_j)$ and there is a
positive function $u_j:\Omega_j\to R_+$ such that
$$
-\Delta u_j=\lambda_1(\Omega_j)u_j, \ \ \ in \ \ \Omega_j
$$
with the Dirichlet boundary condition $u_j|_{\delta \Omega_j}=0$.
Since
$$
u_j(x)=\lambda_1(\Omega_j)\int_{\Omega_j} G_j(x,y)u_j(y)dy,
$$
we have
$$
u_j(x)\leq \lambda_1(\Omega_j)\sup_{y\in \Omega_j}u_j(y)A.
$$
Let $B_j= \sup_{x\in \Omega_j}u_j(x)$. Then $B_j>0$ and
$$
B_j\leq \lambda_1(\Omega_j)B_jA.
$$
Hence, $\lambda_1(\Omega_j)A\geq 1$, which implies that
$\lambda_1(X)\geq A^{-1}$. This completes the proof of the result.
\end{proof}

\end{document}